\documentclass[12pt,a4paper]{article}
\synctex=1
\usepackage{amsfonts}
\usepackage{amsmath}
\usepackage{amsbsy}
\usepackage{amsxtra}
\usepackage{latexsym}
\usepackage{amssymb}
\usepackage{amsthm}
\usepackage{url}
\usepackage{cite}
\usepackage{pstricks,pst-node}
\usepackage{enumerate}
\textwidth 16cm
\textheight 22cm

\makeatletter
\g@addto@macro\thesection
\makeatother

\newtheorem{theorem}{Theorem}

\newtheorem{lemma}{Lemma}

\newtheorem{proposition}{Proposition}
\newtheorem{remark}{Remark}
\newtheorem{definition}{Definition}
\newtheorem{example}{Example}

\DeclareMathOperator{\cone}{cone}

\DeclareMathOperator{\co}{co}

\DeclareMathOperator{\argmin}{argmin}

\DeclareMathOperator{\spa}{span}




\newcommand{\lng}{\langle}
\newcommand{\rng}{\rangle}
\newcommand{\R}{\mathbb R}

\newcommand{\mc}{\mathcal}

\newcommand{\Hyp}{\mathcal H}

\newcommand{\Conv}{\mathcal C}

\newcommand{\Kup}{\mathcal K}

\newcommand{\Aa}{\mathcal A}

\begin{document}

\sffamily

\date{}

\title{The metric geometry of subspaces and convex cones of the Banach space revisited}
\author{A. B. N\'emeth\\Faculty of Mathematics and Computer Science\\Babes Bolyai University, Str. Kog\u alniceanu nr. 1-3\\RO-400084
Cluj-Napoca, Romania\\email: nemeth{\huge\_}sandor@yahoo.com}

\maketitle

\begin{abstract}
Using some earlier and recent results, it is proved, among other things, that in a uniformly convex and uniformly smooth
 real Banach space $X$, the following three assertions are equivalent:

1.The metric projections onto the intersections of pairs of closed hyperplanes are linear;
2.The polars of the intersections of pairs of closed halfspaces are convex;
3.The space is an inner product space.

Hence the linearity of the metric projections onto subspsces of $X$ and the convexity
of the polars of closed convex cones of $X$ are equivalent, and booth are equivalent with 
the inner-product  generatedness of the norm of $X$. 

 \end{abstract}
\section{Introduction}

There is an increasing interest in the metric geometry of subspaces and convex cones in Banach spaces. 
Notable contributions include recent papers \cite{DM}, \cite{KL}, \cite{KL24},  and 
\cite{KhanKongLi2025}.

On a different note, a recent investigation into a very general notion of projection in Euclidean spaces \cite{NemethNemeth2025} raises several convex geometric problems that seem to have their natural place in the geometry of Banach spaces \cite{Nemeth2025}.

This note aims to revisit some earlier results in approximation theory in normed vector spaces \cite{Singer} and, building on \cite{Nemeth2025}, provide a unified approach to the linearity of the metric projection onto subspaces of a Banach space and the convexity of the polars of closed convex cones in this space.


\subsection{The metric projections on closed convex sets}

The nonempty  set  $\Conv \subseteq X$ is called \emph{convex} if
$$u,\,v\in \Conv\,\implies \,[u,v]\subseteq \Conv \textrm{, where }
[u,v]=\{tu+(1-t)v :\, t \in [0,1] \subseteq \R\}.$$

We will  also use the notation for $u,v \in X, u\not=v,\,\,[u,v =\{u+t(v-u),\,t\in \R_+\}.$

If $\Aa \subseteq X$ is a nonempty set,
$$\co \Aa$$
is he smallest  convex set containing $\Aa $ and is called
\emph{the convex hull of $\Aa$.}

Denote by $(X,\|.\|)$ an uniformly convex and uniformly smooth  Banach space over the reals with its norm dual $(X^*,\|.\|^*)$.
 Then for every nonempty closed convex set $\Conv \subset X$ the mapping  
$P_\Conv:X\to \Conv$ given by
	$$P_\Conv x= \argmin \{\|x-c \|: c\in \Conv\}$$
is well defined and single valued.	 $P_\Conv$ is called the \emph{metric projection} of $X$ onto $\Conv$.


\subsection{Hyperplanes and halfspaces}

The set $\Hyp\subseteq  X$ is said \emph{a hyperplane with the normal
$a,\,a\in X^*, \|a\|=1$} if 
$$\Hyp=\{x\in X:\,\lng a,x \rng=0\}.$$ A hyperplane defines two 
\emph{closed halfspaces} defined by
$$\Hyp_+=\{x\in X :\,\lng a,x \rng \geq 0\},\,\,
\Hyp_-=\{x\in X:\,\lng a,x \rng \leq 0\},$$
and two \emph{open halfspaces} defined by
$$\Hyp^+=\{x\in X:\,\lng a,x \rng > 0\},\,\,
\Hyp^-=\{x\in X:\,\lng a,x \rng < 0\}.$$

The translate of the hyperplane $\Hyp$ (halfspace $\Hyp_+,\,\,\Hyp^-$, etc.), i. e.
the sets $\Hyp +u \,\,(\Hyp_+ +u,\,\Hyp^- +u, $ etc.) are also called hyperplane (halfspace).
We also use the notations \textit{}$\Hyp_-(a)=\Hyp_-$ (similarly for the hyperplane
and the other halfspaces) to emphasise that the normal of $\Hyp$ is $a$.

\subsection{Cones and convex cones}

The nonempty set $\Kup \subseteq X$ is called a \emph{cone} if 
 $\lambda x\in \Kup$, for all $x\in \Kup$ and $\lambda \in \R_+$.
	
$\Kup$ is called \emph{convex cone} if it a cone satisfying
 $x+y\in \Kup$, for all $x,y\in \Kup$.

If $\Kup$ is a closed convex cone, then
$$ \Kup^\circ =\{x\in X:\,P_\Kup x=0\}$$
is called the \emph{polar} of the cone $\Kup$ . .


\subsection{The linearity of the metric projections on subspaces\\
and the convexity of the polars of  convex cones}

In the case of $X = H$, a Hilbert space, the metric projection onto every
closed vector subspace in $H$ is linear
and the polar of each convex cone in $H$  is also a closed convex cone. These properties of the metric
projection are fundamental concepts in a vast theory with extensive applications
 (see, for example, the excellent monograph by E. Zarantonello \cite{Zarantonello1971}).
 
The problem of the linearity of the metric projection onto closed subspaces of a Banach space is quite different. While the metric projection onto hyperplanes in a Banach space is linear, the linearity of the metric projection onto every closed linear subspace does not hold
in general. This problem has been a significant concern since the middle of the last century, as illustrated in the monograph by I. Singer \cite{Singer}.

Considering the example of the three dimensional $l_3$ space
  A. Domokos and M. M. Marsh \cite{DM} have got unidimensional subspace admitting nonlinear metric projection.

At a first glance it seems rather surprising the nonlinearity of metric prosection of some subspace of the
three dimensional $l_3$ space due to
the well properties of this space and since it is well known
that the metric projection nonto hyperplanes (subspaces of defect one) in the general
uniformly smooth and uniformly convex Banach space is linear. (See e.g. \cite{Singer}, \cite{DM} or \cite{FN}.)
(This explain  for instance that the counterexample of Domokos and Marsh
must be necessarily unidimensional subspace.)

The methods proving the linearity of the metrfic  projection
on hyperplanes do not work for subspaces of defect greather as one.
  
Recently, A. A. Khan, D. Kong, and J. Li  \cite{KhanKongLi2025} observed that for the same well-behaved three-dimensional Banach space
the polar of some convex cone is not convex.

As we will see next the coincidence of the above two negative results is not suprissing:
they express the same phenomenon. Namely that the linearity of the metric projection on subspaces is equivalent
with the convexity of the polars of convex cones.


\subsection{ The origin of our method}

Let $\R^n$ be the Euclidean $n$-space.

A far-reaching generalizationin   $P_\Kup^f$ of the Euclidean metric projection  of $x\in \R^n$  onto the 
nonempty, closed convex cone $\Kup \subset \R^n$ is 
the minimum point in $\Kup$ of the function $ f(x-.)$ where  $f$ is a non-negative, continuous, strictly quasiconvex
function with $f(0)=0$. It was shown by O. Ferreira and S. Z. N\'emeth in \cite{FN} and \cite {FN1} that a lot of the properties
and problems concerning
the Euclidean metric projection have their natural places in this very general context too.

In a similar vein A. B. N\'emeth and S. Z. N\'emeth in the recent paper  \cite{NemethNemeth2025}
have observed that in this general case $P_\Kup^f$ and $I-P_\Kup^f$ are also mutually polar retraction on cones. The range
of $I-P_\Kup^f$ is always a cone, but it is convex only with strong conditios on $f$.
The range of $I-P_\Kup^f$ is quit the polar $\Kup^{\circ,f}$ of the cone $\Kup$.
One of the principal achivements of \cite{NemethNemeth2025} is the characterization of the function $f$ for which the polar
$\Kup^{\circ,f}$ is convex.

Using a convex geometric terminology 
the problem was settled  by introducing the ad hoc notion of 
the coherent meridian. The developed method has a simple Banach space formulation
using the notion of the normalized duality mapping. This was done in \cite{Nemeth2025}.


\subsection{The normalized duality mapping}

The existence of a well-defined projection onto non-empty closed convex sets is an impportant property of
 reflexive Banach spaces. These are the spaces for which the bidual $(X^*)^*$ is linearly isomorphic to $X$.

The uniqueness of the image of the projection characterizes strictly convex Banach spaces
.

Thus, to ensure the desirable properties of the metric projection, the ambient space for our subsequent considerations will be 
{\bf the uniformly concex uniformly smooth reflexive Banach space $(X,\|.\|)$ with the duality pairing $\lng.\rng$.}

\begin{definition}\label{dual}
In this space is well defined and basic notion the \emph{normalized duality map}
$J:X\to X^*$ given by the relation
$$\lng Jx,x\rng=\|Jx\|_{X^*} \|x\|_X = \|x\|_X^2=\|Jx\|_{X^*}^2.$$
\end{definition}

In our ambient space $J$ is  homeomorphism with
the inverse $J^*=J^{-1}.$
(See e.g.  \cite{Drag},  \cite{Cior},  \cite{DM} , \cite{Dei}.)


\subsection{The main result}

The prezent note is in fact an appendix to  our notes \cite{NemethNemeth2025} and \cite{Nemeth2025}
showing among other things the following assertion:

\begin{theorem}\label{eloz}
In the uniformly convex and uniformly smooth reflexive  
Banach space $X$ of dimension $n\geq 3$  the following assertions are equivalent:
\begin{enumerate}[(i)]

\item 
The metric projection on every nonempty linear and closed subspace of $X$  is linear;
 
 \item 
 
 The metric projection onto the intersection of
 pairs of closed hyperplanes is linear;

 \item
  
$J^*:X^* \to  X$ maps bidimensional subspaces of $X^*$ in bidimensional subspaces of $X$;

\item

The polars of the intersections of pairs of closed halfspaces are convex;

\item  
Each nonempty closed convex cone in $X$ possesses a convex polar;

\item

$X$ is an inner product space.

\end{enumerate}

\end{theorem}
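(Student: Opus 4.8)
The plan is to establish the cycle
$$\mathrm{(vi)}\Rightarrow\mathrm{(i)}\Rightarrow\mathrm{(ii)}\Leftrightarrow\mathrm{(iii)}\Rightarrow\mathrm{(vi)}$$
together with $\mathrm{(vi)}\Rightarrow\mathrm{(v)}\Leftrightarrow\mathrm{(iv)}\Rightarrow\mathrm{(iii)}$, all organised around one formula. The variational inequality for the metric projection gives, for a nonempty closed convex cone $\Kup\subseteq X$, that $P_\Kup x=0$ iff $\lng Jx,k\rng\le0$ for every $k\in\Kup$, i.e.
$$\Kup^\circ=-J^*(\Kup^*),\qquad\Kup^*:=\{\xi\in X^*:\ \lng\xi,k\rng\ge0\ \ \forall\,k\in\Kup\}.$$
Specialising to a closed subspace $S$, where $\Kup^*=S^\perp$, this becomes $S^\circ=J^*(S^\perp)$; for $S=\Hyp(a)\cap\Hyp(b)$ it reads $S^\circ=J^*(\spa\{a,b\})$, and for $\Kup=\Hyp_+(a)\cap\Hyp_+(b)$, since $\Kup^*=\cone\{a,b\}$, it reads $\Kup^\circ=-J^*(\cone\{a,b\})$. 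Throughout I use that $J$ and $J^*=J^{-1}$ are odd, positively homogeneous homeomorphisms.

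\smallskip

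\noindent\emph{The easy arcs.} $\mathrm{(vi)}\Rightarrow\mathrm{(i)}$ and $\mathrm{(vi)}\Rightarrow\mathrm{(v)}$ are the classical Hilbert-space facts, and $\mathrm{(i)}\Rightarrow\mathrm{(ii)}$ is immediate since $\Hyp(a)\cap\Hyp(b)$ is a closed subspace. For $\mathrm{(ii)}\Leftrightarrow\mathrm{(iii)}$, set $S=\Hyp(a)\cap\Hyp(b)$ and $W=\spa\{a,b\}=S^\perp$; the identity $\lng w,J^*w\rng=\|w\|^2$ forces $S\cap J^*(W)=\{0\}$, so if $J^*(W)$ is a $2$-dimensional subspace then $X=S\oplus J^*(W)$ and, because $J(J^*(W))=W=S^\perp$, the linear projection onto $S$ along $J^*(W)$ obeys the variational characterisation of $P_S$; conversely, linearity of $P_S$ makes the range of $I-P_S$, which is exactly $J^*(W)$, a linear subspace, of dimension $2$ because $J^*$ is a homeomorphism. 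For $\mathrm{(v)}\Leftrightarrow\mathrm{(iv)}$: the forward arc is a specialisation, and for $\mathrm{(iv)}\Rightarrow\mathrm{(v)}$, given $x_1,x_2\in\Kup^\circ$ one has $-Jx_1,-Jx_2\in\Kup^*$, so $-J^*(\cone\{-Jx_1,-Jx_2\})$ is convex by $\mathrm{(iv)}$, contains $x_1$ and $x_2$, hence $[x_1,x_2]$, and lies in $-J^*(\Kup^*)=\Kup^\circ$. Finally $\mathrm{(v)}\Rightarrow\mathrm{(iii)}$: by $\mathrm{(v)}$ the set $S^\circ=J^*(\spa\{a,b\})$ is convex; being also a cone and centrally symmetric (oddness of $J^*$), it is a linear subspace, and a $2$-dimensional one since $J^*$ is a homeomorphism.

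\smallskip

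\noindent\emph{The hard arc} $\mathrm{(iii)}\Rightarrow\mathrm{(vi)}$. I would split it in two. \emph{Bootstrap:} $\mathrm{(iii)}$ forces $J^*$ to carry every subspace $W\subseteq X^*$ onto a linear subspace of $X$ of the same dimension. Indeed $J^*(W)$ is always a centrally symmetric cone, and if $u_i=J^*\eta_i$ with $\eta_i\in W$, then either $\eta_1,\eta_2$ are dependent, in which case $u_1+u_2$ is a scalar multiple of one of them and so lies in $J^*(W)$, or $\spa\{\eta_1,\eta_2\}$ is a $2$-dimensional subspace of $W$ whose image $J^*(\spa\{\eta_1,\eta_2\})$ is, by $\mathrm{(iii)}$, a linear subspace containing $u_1$ and $u_2$, whence $u_1+u_2\in J^*(\spa\{\eta_1,\eta_2\})\subseteq J^*(W)$; thus $J^*(W)$ is closed under addition, hence a subspace, of dimension $\dim W$ because $J^*$ is a homeomorphism. \emph{Rigidity:} by the bootstrap, for every line $\R e\subseteq X$ the set $\{x:\lng Jx,e\rng=0\}=J^*(\{\xi:\lng\xi,e\rng=0\})$ is a hyperplane through $0$. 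Fix any $3$-dimensional subspace $Y$ and any $e\in Y\setminus\{0\}$; since $\lng J_Yy,e\rng=\lng Jy,e\rng$ for $y\in Y$, the shadow boundary of the unit ball $B_Y$ under parallel illumination in direction $e$ is $\{y\in Y:\lng Jy,e\rng=0\}\cap\partial B_Y$, which is planar, and a genuine $2$-plane because $e$ itself does not satisfy $\lng Je,e\rng=0$. Since $B_Y$ is smooth and strictly convex with all of its shadow boundaries planar, the classical characterisation of ellipsoids shows $B_Y$ is an ellipsoid, i.e. $Y$ is an inner product space; as any two vectors of $X$ lie in such a $Y$, the parallelogram law holds on $X$ and Jordan--von Neumann gives $\mathrm{(vi)}$. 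Equivalently, the bootstrap already yields $\mathrm{(i)}$ through the decomposition $X=S\oplus J^*(S^\perp)$ for every closed subspace $S$, and one may instead cite the classical fact, recorded in \cite{Singer}, that linearity of all metric projections characterises inner product spaces in dimension at least $3$.

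\smallskip

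\noindent\emph{Main obstacle.} I expect the rigidity step to be the real difficulty: conditions $\mathrm{(ii)}$--$\mathrm{(iii)}$ speak \emph{a priori} only about codimension two, yet they must produce the global conclusion $\mathrm{(vi)}$. The bootstrap upgrades ``codimension two'' to ``all codimensions'' cheaply, but turning ``every Birkhoff-orthogonal complement of a line is a hyperplane'' (equivalently, ``every shadow boundary is planar'') into ``inner product space'' needs the substantial classical input of the ellipsoid characterisation; this is precisely where the earlier results surveyed in \cite{Singer} and the convex-geometric viewpoint of \cite{Nemeth2025} come in. Two minor points also need care: the variational inequality and the single-valuedness of $P_\Conv$, $J$, $J^*$ rest on the uniform convexity and uniform smoothness of $X$, and the hypothesis $n\ge3$ is essential, since in the plane $\mathrm{(i)}$--$\mathrm{(iii)}$ hold automatically while $\mathrm{(iv)}$--$\mathrm{(v)}$ can fail.
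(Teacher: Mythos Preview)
Your proof is correct and essentially complete, but it is organised differently from the paper's. The paper runs the cycle $(i)\Rightarrow(ii)\Rightarrow(iii)\Rightarrow(iv)\Rightarrow(v)\Rightarrow(i)$ together with the classical equivalence $(i)\Leftrightarrow(vi)$ from \cite{Singer}; the steps $(iii)\Rightarrow(iv)$ and $(iv)\Rightarrow(v)$ are not argued in the paper but are imported from \cite{Nemeth2025}, and the internal equivalences are phrased through the ``mutually polar retractions'' machinery of Section~\ref{mutu} (Propositions~\ref{kov}--\ref{kovlin}) rather than through the variational inequality. Your route instead makes the single identity $\Kup^\circ=-J^*(\Kup^*)$ do all the work: it gives $(ii)\Leftrightarrow(iii)$, $(iv)\Leftrightarrow(v)$ and $(v)\Rightarrow(iii)$ directly, so you never need the results quoted from \cite{Nemeth2025}. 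For the passage to $(vi)$ you offer two options; the second (bootstrap to $(i)$, then cite \cite{Singer}) is exactly what the paper does, while your shadow-boundary/ellipsoid argument is a genuine alternative that the paper does not give. The trade-off is that the paper's proof is shorter because it outsources the two hardest implications, whereas yours is self-contained but must justify the variational characterisation and the Blaschke-type ellipsoid step. One small inaccuracy in your closing remark: in dimension~$2$ conditions $(iv)$--$(v)$ do \emph{not} fail---a homeomorphic, odd, positively homogeneous image of a planar wedge is again a wedge---so all of $(i)$--$(v)$ hold automatically there while $(vi)$ need not; this is precisely the content of the paper's Remark~1.
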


\begin{remark}

The condition $\dim X \geq 3$ in the theorem is essential. If $\dim X \leq 2$ the assertion (vi)
of the theorem does not hold.

\end{remark}


\section{Mutually polar retractions: definition \\  and examples}\label{mutu}

We suppose in this secrion that $X$ is a real vector space of dimension $\geq 3$.

The  mapping $Q:X\to X$ is called \emph{retraction} if  it
is \emph{idempotent}, that is, if $Q^2=Q$.
Let $0$ be the zero mapping and $I$ the identity mapping of $X$. They 
are retractions.

\begin{definition}\label{muture} 
The mappings $P,R:\,X\to X$ are called
\emph{mutually polar retractions} if 
\begin{enumerate}[(i)]
	\item $P$ and $R$ are retractions, 
	\item $P+R=I$,
\end{enumerate}
\end{definition}

 We will use the notations $PX=\mc P$ and $RX=\mc R$.

 We will say that $P$ and $R$ 
 are \emph{polars of each other}.
 Also, the sets $\mc P$ and $\mc R$ are said {\em mutually polar}.

\begin{example}\label{triv}
The mappings $I$ and $0$ are mutually polar retractions.
\end{example}

\begin{example}\label{ex0}
If $\R^n$ is the Euclidean $n$-space
endowed wih the orhonormal system $e^1,...,e^n$ and the reference system it defines, then putting 
$x=t_1e^1+\dots+t_ne^n$, consider the mappings
$$Qx=|t_1|e^1+\dots+|t_n|e^n$$
and
$$Px=t_1^+e^1+\dots+t_n^+e^n, $$
where $t^+$ denotes the positive part of the real number $t$.

$Q$ and $P$ are both retractions on the positive orthant $\R^n_+$. 
$I-P$ is retraction too, but $I-Q$ is not.
\end{example}

\begin{example}\label{eex2}

If $\mc K\subseteq \R^n$ is a closed convex cone and $\mc K^\circ$ is the polar of  $\mc K$,
then  the metric projections $P_\mc K$ and $P_{\mc K^\circ}$ are mutually polar retractions.
This is the consequence of the following decomposition theorem of Moreau (see
\cite{Moreau1962}, \cite{Zarantonello1971}).

\begin{theorem} [Moreau]\label{moreau}
	Let  $\mc K,\,\,\mc L\subset \R^n$ be  two mutually polar closed 
	convex cones. Then, the following statements are equivalent:
	\begin{enumerate}
		\item[(i)] $z=x+y,~x\in \mc K,\,\,~y\in \mc L$ and $\lng x,y\rng=0$,
		\item[(ii)] $x=P_\mc K(z)$ and $y=P_\mc L(z)$.
	\end{enumerate}
\end{theorem}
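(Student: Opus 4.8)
The plan is to reduce both implications to a single characterization of the metric projection onto a closed convex cone, and then to exploit the symmetry between $\mc K$ and $\mc L$ furnished by mutual polarity. The starting point is the classical variational inequality for the projection onto a nonempty closed convex set $C\subset\R^n$: a point $x$ equals $P_C(z)$ if and only if $x\in C$ and $\lng z-x,\,c-x\rng\le 0$ for every $c\in C$. This is standard and follows from the convexity of $t\mapsto\|z-(x+t(c-x))\|^2$ on $[0,1]$ together with the fact that a minimizer at the left endpoint $t=0$ must have nonnegative one-sided derivative there.

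First I would specialize this to a cone. If $\mc K$ is a closed convex cone and $x\in\mc K$, then testing the variational inequality at $c=0$ and at $c=2x$ (both of which lie in $\mc K$) forces $\lng z-x,\,x\rng=0$; the remaining content then reads $\lng z-x,\,c\rng\le 0$ for all $c\in\mc K$, which is exactly $z-x\in\mc K^\circ$, and conversely these three facts reproduce the inequality. This gives the key lemma: $x=P_\mc K(z)$ holds if and only if
$$x\in\mc K,\qquad z-x\in\mc K^\circ,\qquad \lng x,\,z-x\rng=0.$$
Applying the same argument with projection point $0$ shows that the polar used here, $\mc K^\circ=\{x:\,P_\mc K x=0\}$, coincides with the dual-cone description $\{x:\,\lng x,c\rng\le 0,\ \forall c\in\mc K\}$, so that ``mutually polar'' means precisely $\mc L=\mc K^\circ$ and $\mc K=\mc L^\circ$.

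With the lemma in hand the two implications are short. For (i)$\Rightarrow$(ii), the hypotheses $x\in\mc K$, $y=z-x\in\mc L=\mc K^\circ$ and $\lng x,y\rng=0$ are exactly the three conditions of the lemma, so $x=P_\mc K(z)$; applying the lemma to $\mc L$ (using $\mc L^\circ=\mc K$) with the roles of the two summands interchanged gives $y=P_\mc L(z)$. For (ii)$\Rightarrow$(i), from $x=P_\mc K(z)$ the lemma yields $x\in\mc K$, $w:=z-x\in\mc K^\circ=\mc L$ and $\lng x,w\rng=0$; reading these as the lemma's conditions for $\mc L$ shows $w=P_\mc L(z)$, whence uniqueness of the metric projection forces $y=w=z-x$, so that $z=x+y$ with $x\in\mc K$, $y\in\mc L$ and $\lng x,y\rng=\lng x,z-x\rng=0$.

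The only genuinely delicate point is the cone specialization lemma: one must verify that the two test directions $c=0$ and $c=2x$, both available because $\mc K$ is a cone, reduce the variational inequality to the orthogonality $\lng x,z-x\rng=0$ together with the family of inequalities defining $z-x\in\mc K^\circ$, and conversely that these recover the inequality for every $c\in\mc K$. Everything else is bookkeeping: the equivalence of the two descriptions of the polar and the appeal to uniqueness of $P_C$ in the Euclidean (hence strictly convex) setting. In particular the bipolar identity $\mc K^{\circ\circ}=\mc K$ is not needed as a separate ingredient, since mutual polarity is assumed outright.
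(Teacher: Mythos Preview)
Your argument is correct: the variational-inequality characterization of $P_C$, its specialization to cones via the test points $c=0$ and $c=2x$, and the resulting symmetry between $\mc K$ and $\mc L$ give exactly the two implications. The identification of the polar defined by $P_\mc K x=0$ with the dual-cone description is also handled cleanly.

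There is nothing to compare against, however: the paper does not prove this theorem. It is quoted inside an example as a classical result of Moreau, with references to \cite{Moreau1962} and \cite{Zarantonello1971}, and is used only to illustrate the notion of mutually polar retractions. Your proof is the standard one and would be entirely appropriate as a self-contained justification; just be aware that in the context of the paper no proof was expected.
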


\end{example}
 
\begin{example}\label{riesz}

If $(X,\mc K)$ is a Riesz space with the positive cone $\mc K$ then $P$ and $R$ defined by
$Px=x^+$ and $Rx=-x^-$ are mutually polar retractions. Here $\mc P=\mc K$ and $\mc R = -\mc K$.

\end{example}

\begin{proposition}\label{kov}

If $P$ and $R$ are mutually polar retractions, then
\begin{enumerate}[(i)]
	\item $PR=RP=0.$
	\item  $\mc P \cap \mc R = \{0\}$ and $\mc P + \mc R= X$,
	\item $P0=R0 =0$;
	\item $Px-Rx= 0$ if and only if $x=0$.
\end{enumerate}
    \end{proposition}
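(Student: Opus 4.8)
The four assertions are elementary consequences of Definition~\ref{muture}, so the plan is to verify each one by a short algebraic manipulation using only the two defining properties $P^2=P$, $R^2=R$, and $P+R=I$. I would carry them out in the order (i), then (iii), then (ii) and (iv), since (i) is the workhorse and the others lean on it.

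\emph{Step 1: proof of (i).} Substitute $R=I-P$ and expand: $PR=P(I-P)=P-P^2=P-P=0$, and symmetrically $RP=(I-P)P=P-P^2=0$. This is the only computation in the proposition that really uses idempotence of $P$; the rest is bookkeeping.

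\emph{Step 2: proof of (iii).} Evaluate $P+R=I$ at $x=0$ to get $P0+R0=0$. Separately, since $0\in\mathcal P$ would suffice but is not yet known, instead note $P0=P(P0)\in\mathcal P$ is not the route; rather, apply $P$ to the identity $P0+R0=0$ and use $PR=0$ from Step~1: $P(P0)+P(R0)=P0=0$, hence $P0=0$, and then $R0=-P0=0$ as well. (Alternatively one may observe $P0 = P(2\cdot 0)$ is not available without linearity, so the argument via $PR=0$ is the clean one.)

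\emph{Step 3: proof of (ii).} For any $x\in X$, $x=Px+Rx$ with $Px\in\mathcal P$ and $Rx\in\mathcal R$, giving $\mathcal P+\mathcal R=X$. For the intersection, take $x\in\mathcal P\cap\mathcal R$; write $x=Pu=Rv$ for some $u,v$. Then $Px=P(Pu)=Pu=x$ and similarly $Rx=x$; now apply $P$ to $x=Rx$ and use $PR=0$ to conclude $x=Px=P(Rx)=(PR)x=0$, so $\mathcal P\cap\mathcal R=\{0\}$.

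\emph{Step 4: proof of (iv).} If $x=0$ then $Px-Rx=0-0=0$ by Step~2. Conversely, suppose $Px=Rx$. Adding $Px$ to both sides of $Px+Rx=x$ gives $2Px=x+(Rx-Px)... $ — more directly, from $Px=Rx$ and $Px+Rx=x$ we get $2Px=x$, hence $Px=\tfrac12 x$, but $Px\in\mathcal P$ and also $Rx=Px$, so $x=Px+Rx=2Px\in\mathcal P\cap\mathcal R=\{0\}$ by Step~3. Thus $x=0$.

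\emph{Main obstacle.} There is essentially no obstacle here: the statement is a formal consequence of the two axioms, and every step is a one-line identity. The only point requiring a little care is not invoking linearity or homogeneity of $P$ and $R$ anywhere (these maps are merely idempotent), which is why $P0=0$ must be derived from $PR=0$ rather than from $P(0)=P(0+0)$; keeping the argument purely within the retraction/complementarity framework is the one thing to watch.
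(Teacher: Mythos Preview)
Your arguments for (i) and (ii) are correct and essentially identical to the paper's (the paper computes $PR=(P+R)R-R^2$ instead of $P(I-P)$, a cosmetic difference). Your (iv) reaches the right conclusion by the same mechanism as the paper, though the sentence ``$x=2Px\in\mc P\cap\mc R$'' is not quite right: neither $\mc P$ nor $\mc R$ is assumed closed under scalars, so you cannot place $2Px$ there. The clean version (which you in fact have all the pieces for) is simply $Px=Rx\in\mc P\cap\mc R=\{0\}$, hence $Px=Rx=0$ and $x=Px+Rx=0$; that is exactly what the paper writes.

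There is, however, a genuine gap in your Step~2 (item (iii)). You write ``apply $P$ to the identity $P0+R0=0$ and use $PR=0$: $P(P0)+P(R0)=P0=0$''. But $P$ is not additive, so applying $P$ to $P0+R0=0$ only yields the tautology $P(0)=P(0)$; you cannot split $P(P0+R0)$ as $P(P0)+P(R0)$. And even the (correct) computation $P(P0)+P(R0)=P0+0=P0$ does not by itself force $P0=0$. You warned yourself against using linearity and then used additivity in the very next line.

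The paper's fix avoids additivity entirely: set $x:=P0$. Then $Px=P(P0)=P^2 0=P0=x$, so from $x=Px+Rx$ one gets $Rx=0$. Now substitute $0=Rx$ inside $R$: $R0=R(Rx)=R^2x=Rx=0$. Finally $P0=-R0=0$ from $P0+R0=0$. This uses only idempotence and $P+R=I$, never additivity.
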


\begin{proof}
	$\,$

	\begin{enumerate}[(i)]
		\item From Definition \ref{muture}, it follows that
			\(PR=(P+R)R-R^2=R-R^2=0.\) Hence, by swapping $P$ and
			$R$, we also have $RP=0$.
		\item Let any $z\in\mc P\cap\mc R$. Hence, there exists
			$x,y\in X$ such that $z=Px=Ry$, which, by using the 
			previous item, implies $z=Px=P^2x=PRy=0$. The second
			equality is trivial.
		\item Suppose that $P0=x$ Then
			$$x=P0=P^20=Px,\,\,\Rightarrow\,\,x=Px+Rx=x+Rx,\,\,\Rightarrow \,\, Rx=0,\,\,$$
			$$\Rightarrow Rx=R^2x = R0,\,\,\Rightarrow \,\, x=Px+Rx=P0+R0 = 0.$$
		\item The equality $P0-R0=0$ is from item (iii). Suppose that $Px-Rx=0$. 
			Hence, by using the first equality of item (ii), we
			obtain $Px=Rx\in\mc P\cap\mc R=\{0\}$. Hence, the second 
			equality of item (ii) yields $x=Px+Rx=0$. 
	\end{enumerate}
\end{proof}
The followig result of general character from the linear algepra can be  verified directly:

\begin{lemma}\label{lpt}

If $\mc M$ and $\mc N$ are vector subspaces of $X$ with the properties
\begin{enumerate}[(i)]

\item $\mc M+\mc N = X$ and

\item $\mc M \cap \mc N = \{0\} $

then
\item
for every $x\in X$ there exist a unique element $Mx \in \mc M$ and a unique element $Nx \in \mc N$ such that
$$ x=Mx+Nx$$ and
\item the mappings $M$ and $N$ defined this way are linear operators.

\end{enumerate}
\end{lemma}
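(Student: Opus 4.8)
The plan is to verify Lemma~\ref{lpt} by the standard direct-sum argument. First I would establish the existence part (item (iii)): given $x\in X$, hypothesis (i) guarantees at least one decomposition $x=m+n$ with $m\in\mc M$, $n\in\mc N$; if $x=m'+n'$ were another such decomposition, then $m-m'=n'-n$ lies in $\mc M\cap\mc N$, which by hypothesis (ii) equals $\{0\}$, forcing $m=m'$ and $n=n'$. This justifies denoting the unique components by $Mx$ and $Nx$, so that $x=Mx+Nx$ with $Mx\in\mc M$ and $Nx\in\mc N$.

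Next I would check linearity (item (iv)). For $x,y\in X$ and scalars $\lambda,\mu\in\R$, the element $\lambda Mx+\mu My$ belongs to $\mc M$ and $\lambda Nx+\mu Ny$ belongs to $\mc N$ (both being subspaces), and their sum is $\lambda(Mx+Nx)+\mu(My+Ny)=\lambda x+\mu y$. By the uniqueness established above, this forced decomposition of $\lambda x+\mu y$ must coincide with $M(\lambda x+\mu y)+N(\lambda x+\mu y)$, whence $M(\lambda x+\mu y)=\lambda Mx+\mu My$ and similarly for $N$. Thus $M$ and $N$ are linear operators on $X$.

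Since this is elementary linear algebra, there is no real obstacle; the only point requiring a modicum of care is keeping the two assertions separate — using (ii) purely for uniqueness and then invoking that uniqueness, rather than any further structure, to transfer the vector-space operations through $M$ and $N$. Indeed, because the statement says the lemma ``can be verified directly,'' it is natural simply to record these two short arguments and omit further commentary.

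\begin{proof}
Let $x\in X$. By (i) there are $m\in\mc M$ and $n\in\mc N$ with $x=m+n$. If also $x=m'+n'$ with $m'\in\mc M$, $n'\in\mc N$, then $m-m'=n'-n\in\mc M\cap\mc N=\{0\}$ by (ii), so $m=m'$ and $n=n'$. Hence the components are unique; write $Mx:=m$ and $Nx:=n$. This proves (iii).

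For (iv), fix $x,y\in X$ and $\lambda,\mu\in\R$. Then $\lambda Mx+\mu My\in\mc M$ and $\lambda Nx+\mu Ny\in\mc N$, and
$$(\lambda Mx+\mu My)+(\lambda Nx+\mu Ny)=\lambda(Mx+Nx)+\mu(My+Ny)=\lambda x+\mu y.$$
By the uniqueness just proved, $M(\lambda x+\mu y)=\lambda Mx+\mu My$ and $N(\lambda x+\mu y)=\lambda Nx+\mu Ny$. Thus $M$ and $N$ are linear.
\end{proof}
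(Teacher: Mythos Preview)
Your argument is correct and is exactly the standard direct-sum verification the paper has in mind; the paper itself offers no proof beyond the remark that the lemma ``can be verified directly,'' so there is nothing further to compare.
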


If $Q$ is a retraceion wuth $\mc Q$ a cone , then  $Q$ will be called \emph{cone retract}.

\begin{proposition}\label{kovlin}

If $P$ and $R$ are mutually polar retractions, then
\begin{enumerate}[(i)]
	\item They are together cone retracts, and they are together linear;
	
	\item  $P$ and $R$ are linear if
	and only if $\mc P$ and $\mc R$  are vevtor spaces;

\end{enumerate}
    \end{proposition}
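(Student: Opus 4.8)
The plan is to deduce everything from the single canonical decomposition $x = Px + Rx$ together with the facts already collected in Proposition~\ref{kov}. The first step I would take is to record the descriptions
$$\mc P = \{x\in X : Px = x\} = \{x\in X : Rx = 0\}, \qquad \mc R = \{x\in X : Rx = x\} = \{x\in X : Px = 0\},$$
which follow at once from idempotency together with $PR = RP = 0$. Read against Proposition~\ref{kov}(ii), they tell me that $\mc P + \mc R = X$, that $\mc P\cap\mc R = \{0\}$, and --- most importantly --- that for \emph{every} $x$ the pair $(Px,Rx)$ realises $x$ as a sum of an element of $\mc P$ and an element of $\mc R$. This matching of the retraction decomposition against an algebraic one is the engine of the whole argument.

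For part (i) the linearity clause is immediate: since $R = I - P$ and $I$ is linear, $P$ is linear if and only if $R$ is, so the two maps are linear together. The cone-retract clause I would establish in the same regime: as soon as $P$ (equivalently $R$) is linear, the ranges $\mc P = PX$ and $\mc R = RX$ are linear images of $X$, hence linear subspaces and \emph{a fortiori} cones, so $P$ and $R$ are cone retracts together. No work beyond the remark $R = I - P$ is needed here.

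The substance is in part (ii). The forward direction repeats the previous observation: a linear idempotent has a subspace as its range, so if $P$ and $R$ are linear then $\mc P$ and $\mc R$ are vector subspaces. For the converse I would assume $\mc P$ and $\mc R$ are subspaces and invoke Lemma~\ref{lpt}: because $\mc P + \mc R = X$ and $\mc P \cap \mc R = \{0\}$ (Proposition~\ref{kov}(ii)), the lemma produces \emph{unique} linear maps $M,N$ with $x = Mx + Nx$, $Mx\in\mc P$, $Nx\in\mc R$. Since the retraction decomposition $x = Px + Rx$ likewise has $Px\in\mc P$ and $Rx\in\mc R$, the uniqueness in Lemma~\ref{lpt} forces $Px = Mx$ and $Rx = Nx$ for all $x$; hence $P = M$ and $R = N$ are linear.

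The one place deserving care --- and the step I would treat as the crux --- is this last identification. A priori the decomposition $x = Px + Rx$ is only the decomposition coming from the two retractions and carries no linearity whatsoever; what makes the argument go through is that once $\mc P$ and $\mc R$ are subspaces, Proposition~\ref{kov}(ii) upgrades $X = \mc P + \mc R$ to an \emph{algebraic direct sum}, so the splitting of each $x$ into a $\mc P$-part and an $\mc R$-part becomes unique. It is precisely this uniqueness, supplied by Lemma~\ref{lpt}, that collapses the possibly nonlinear $P,R$ onto the linear coordinate projections $M,N$; everything else in the proposition is formal.
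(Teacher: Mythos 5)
Your treatment of item (ii) --- the substantive half of the proposition, and the only part the paper actually invokes later (in Theorem \ref{projhyp} and twice in the proof of Theorem \ref{eloz}) --- is exactly the paper's argument: the forward direction because a linear idempotent has a subspace as range, and the converse by feeding Proposition \ref{kov}(ii) into Lemma \ref{lpt} and matching the decomposition $x = Px + Rx$ against the unique linear one; you merely spell out the identification $P=M$, $R=N$ that the paper leaves implicit. The linearity clause of item (i) is likewise handled identically, via $R = I - P$.

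The one point where you part company with the paper is the cone-retract clause of (i). The paper reads it as an unconditional equivalence (``$\mc P$ is a cone if and only if $\mc R$ is a cone'') and claims it follows from $P = I-R$ and the linearity of $I$; you instead prove only that both ranges are cones once one of the two maps is linear. Your caution is justified, because the unconditional equivalence is in fact false for general mutually polar retractions. On $\R$, define $Pt = t$ for $t \geq 0$, $Pt = 0$ for $t \in (-\infty,-2] \cup [-1,0]$, and $Pt = t+2$ for $t \in (-2,-1)$; a direct check gives $P^2 = P$ and $(I-P)^2 = I-P$, so $P$ and $R = I-P$ are mutually polar retractions, while $\mc P = \R_+$ is a cone but $\mc R = (-\infty,-2] \cup [-1,0]$ is not (it contains $-1$ but not $-3/2$). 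Acting with this pair on the first coordinate of $\R^n$ (and with the identity, respectively the zero map, on the remaining coordinates) transports the example to any dimension $\geq 3$. So the literal first clause of (i) is not provable --- the paper's one-line proof does not establish it either --- and your conditional version is the most that is true; since everything downstream uses only item (ii), nothing in the paper is affected by this weakening.
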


\begin{proof}
	$\,$

	\begin{enumerate}[(i)]
		\item From Definition $P=I-R$ and since $I$ is liear the assertions follow.
		
		\item If $P$ and $R$ are linear then their range $\mc P$ and $\mc R$ are vector spaces.
		
		Suppose that $\mc P$ and $\mc R$  are vector spaces. Then from the condition of  polarity of $P$ and $R$
		 in  item (ii) of Proposition \ref{kov} via Lemma \ref{lpt} it follows that  $P$ and $R$ are linear.
		  
	\end{enumerate}
\end{proof}


\section{Projection on the  nonempty closed convex cone}

\begin{theorem}\label{footet}
If $\Kup \subset X$ is a nonempty, closed convex cone in the uniformly convex
and uniformly smooth Banach space $X$ of dimension $\geq 3$, then  we have for the metric projection $P= P_\Kup$ that
\begin{equation}\label{foo1}
	P \,\, \textrm{and}\,\, R= I-P
\end{equation}
are mutually polar retractions and hence also $\Kup^\circ = RX$.
\end{theorem}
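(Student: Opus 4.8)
The plan is to verify the two defining conditions of mutually polar retractions from Definition~\ref{muture}: namely that $P=P_\Kup$ is idempotent and that $P+R=I$, the latter being immediate from the definition $R=I-P$. The nontrivial content is thus the idempotency $P^2=P$, together with the identification $\Kup\cc=RX$.

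For idempotency, I would argue as follows. Since $\Kup$ is a closed convex cone and $X$ is uniformly convex and uniformly smooth (hence strictly convex and reflexive), $P_\Kup$ is well defined and single valued. For any $x\in X$, the point $Px$ lies in $\Kup$; I claim $P(Px)=Px$. Indeed, for any $c\in\Kup$ we have $\|Px-Px\|=0\le\|Px-c\|$, so $Px$ is itself the (unique) nearest point of $\Kup$ to $Px$, giving $P(Px)=Px$. This shows $P^2x=Px$ for every $x$, so $P$ is a retraction; since $R=I-P$ and $I$ is a retraction, it follows just as in Example~\ref{triv} and the proof of Proposition~\ref{kovlin}(i) that the pair $(P,R)$ satisfies (i) and (ii) of Definition~\ref{muture}, i.e.\ $P$ and $R$ are mutually polar retractions. (Here I would note that the genuinely ``projection-theoretic'' input reduces to this one-line nearest-point argument; the cone hypothesis is not even needed for idempotency, only closedness and the ambient space properties guaranteeing single-valuedness.)

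It remains to identify $RX$ with the polar $\Kup\cc$. By the definition given in the excerpt, $\Kup\cc=\{x\in X:P_\Kup x=0\}$. If $x\in\Kup\cc$ then $Px=0$, so $Rx=x-Px=x$, whence $x\in RX$; thus $\Kup\cc\subseteq RX$. Conversely, let $y\in RX$, say $y=Rx=x-Px$ for some $x\in X$. Using $PR=0$ from Proposition~\ref{kov}(i), we get $Py=P(Rx)=0$, so $y\in\Kup\cc$; thus $RX\subseteq\Kup\cc$. Combining the inclusions gives $\Kup\cc=RX$, which also shows $\Kup\cc$ is a cone (being the range of the cone retract $R$, by Proposition~\ref{kovlin}(i)).

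The main obstacle I anticipate is not in the algebra above but in making sure the standing hypotheses are invoked correctly: one must be careful that single-valuedness of $P_\Kup$ (needed so that ``$P$ is a mapping'' and $P^2$ makes sense) genuinely follows from strict convexity and reflexivity, and that $P_\Kup x\in\Kup$ always holds, which relies on $\Kup$ being closed and nonempty. Once these are in place the idempotency is essentially free, and the identification $\Kup\cc=RX$ is a formal consequence of Proposition~\ref{kov}(i); so the ``hard part'' is really just citing the right functional-analytic facts about metric projections in uniformly convex, uniformly smooth spaces, all of which are recalled earlier in the paper.
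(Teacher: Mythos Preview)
Your argument has a genuine gap: you never verify that $R=I-P$ is idempotent, and this is precisely the nontrivial content of the theorem. From $P^2=P$ alone one \emph{cannot} conclude $R^2=R$, because $P$ is not linear; writing out $R^2x=(x-Px)-P(x-Px)$ shows that $R^2=R$ is equivalent to $P(x-Px)=0$ for all $x$, i.e.\ to $PR=0$, and this requires proof. Example~\ref{ex0} in the paper is exactly a warning against the step you take: there $Q$ is an idempotent map onto $\R^n_+$ but $I-Q$ is \emph{not} idempotent. Your appeal to Example~\ref{triv} and Proposition~\ref{kovlin}(i) does not help, since neither result produces idempotency of $I-P$ from idempotency of $P$. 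Your parenthetical remark that ``the cone hypothesis is not even needed for idempotency'' pinpoints the misconception: it is not needed for $P^2=P$, but it is essential for $R^2=R$.

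This gap also makes your identification $RX=\Kup\cc$ circular: you invoke $PR=0$ from Proposition~\ref{kov}(i), but that proposition \emph{assumes} $P$ and $R$ are already mutually polar retractions. The paper's proof proceeds in the opposite order: it uses the fact that $\Kup$ is closed under addition to show directly that for every $v\in\Kup$ one has $\|x-Px\|\le\|x-Px-v\|$, hence $P(x-Px)=0$, i.e.\ $PR=0$; from this $R^2=(I-P)-P(I-P)=I-P=R$ follows, and then Proposition~\ref{kov}(i) (or the same computation) legitimately yields $RX\subseteq\Kup\cc$. Inserting this one-paragraph argument at the point where you claim $R$ is a retraction would repair your proof and bring it in line with the paper's.
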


\begin{proof}

The sum of $P$ and $R$ is obviously the identity mapping.

We have only to show that $R$ is idempotent.

	Since $\Kup$ is closed under the addition, $Px+v\in \Kup$ for
	any $v\in \Kup$ and since $$Px=\argmin\{ \|x-u\| :u\in \Kup \},$$
we have $$ \|x-Px\| \leq \|x-Px-v\|$$
and since the projection is unique, it  follows that
$$\argmin\{\|x-Px-v\|: v\in \Kup \}=0.$$

Hence,
$$ P(x-Px) =\argmin\{ \|(x-Px)-u\|: u\in \Kup \} = \argmin\{ \| x-Px-u\|:u\in \Kup \}=0,\,\,\forall \, x$$
according to the above consideration. Thus
\begin{equation}\label{uf}
PR=P(I-P)=0.
\end{equation}

From (\ref{uf}) we have

$R^2= (I-P) (I-P) = I-P-P(I-P) = I-P= R.$

 \end{proof}
 
 
\section{The polar of the hyperplane and of the hypercone}

Let $(X,\|.\|)$ uniformly convex and uniformly smooth  reflexive Banach space of dimension $\geq 3$, 
   $(X^*,\|.\|^*)$ its norm dual and with $\lng . \rng$
the associated duality mapping . In this space every nonempty closed convex set is the intersection of closed halfspaces 
(\cite{Singer}, \cite{Cior}, \cite{Drag}).

 Denote
$S$ and respectively $S^*$ the \emph{unit spheres} in these spaces, that is:
$$ S=\{x\in X:\, \|x\|=1\} \,\, \textrm{and}\,\, S^*=\{x^*\in X^*:\,\|x^*\|^*=1\}.$$

The cone $\Kup \subset X$ is called a \emph{hypercone} if $\Kup \not= X$ and there
is no convex cone except $X$ containing it properly. Since the cloasure of a convex cone
is always a closed convex cone, it follows that the hypercone must be closed. Since $\Kup$ is the intersection of
closed halfspaces, it follows that there exists $a\in S^*$ such that
$$\Kup=\Hyp_-(a),\,\, a\in S^*,$$
that is a hypercone is in fact a closed halfspace whose supporting hyperplane contains $0$.

\begin{proposition}\label{felt}
If $$\Hyp(a)=\{x\in X:\,\lng a,x \rng=0\}$$    
is a hyperplane in $X$, it can be written using $J$ in the form
$$\Hyp(a)=\{x\in X:\,\lng J(y),x \rng=0\}$$
for some $y\in S$.
Thus any hypercone  $\Hyp_-(a)$ can be written in the form $\Hyp_-(Jx)$
with some uniqe $x \in S$ and
$$ \Hyp_-(Jx)^\circ = \cone\{x\}= \{tx:\,t\geq 0\} =[0,x ,$$
that is the polar of a hypercone is a closed halfline issuing from $0$.

We have similarly
$$\Hyp_+(J(x))^\circ =[0,-x .$$

Since 
$$\Hyp(J(x))=\Hyp_-(J(x)) \cap \Hyp_+(J(x)),$$
it follows that
$
\Hyp(J(x))^\circ= \Hyp_-{J(x)}^\circ  \cup \Hyp_+{J(x)}^\circ =[0,x \cup [0,-x =\spa \{x\}.
$

This means that the polar of the hyperplane
is a stright line= a one dimensional vector space.
\end{proposition}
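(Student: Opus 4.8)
The plan is to reduce the whole statement to one explicit formula: that for $a\in S^*$ the metric projection onto the hyperplane $\Hyp(a)$ is $P_{\Hyp(a)}z=z-\lng a,z\rng\,J^{-1}(a)$, combined with the elementary observation that projecting a point onto a closed halfspace coincides with projecting it onto the bounding hyperplane as soon as the point lies on the ``wrong'' side. From these two facts the polars of $\Hyp_-(Jx)$, $\Hyp_+(Jx)$ and $\Hyp(Jx)$ are read off directly.

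First I would record the representations. Since $J$ is a homeomorphism of $X$ onto $X^*$ it is surjective, so for $a\in S^*$ there is $y\in X$ with $Jy=a$, and $\|Jy\|_{X^*}=\|y\|_X$ forces $\|y\|=1$; hence $\Hyp(a)=\Hyp(J(y))$, and, applied to the normal $a\in S^*$ of a hypercone (which by the discussion preceding the Proposition is precisely a halfspace $\Hyp_-(a)$), it gives $\Hyp_-(a)=\Hyp_-(Jx)$ with $x=J^{-1}(a)\in S$. For uniqueness, if $\Hyp_-(Jx)=\Hyp_-(Jx')$ with $x,x'\in S$ then $Jx$ and $Jx'$ have the same kernel, so $Jx'=\lambda Jx$ for some $\lambda\neq0$; testing against a point where $Jx$ is negative forces $\lambda>0$, and comparing norms forces $\lambda=1$, so $x=x'$ by injectivity of $J$.

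The core step is the hyperplane formula. Writing $u\in\Hyp(a)$ as $u=z-w$ with $\lng a,w\rng=\lng a,z\rng=:\beta$, one has $\|w\|\geq|\lng a,w\rng|=|\beta|$, whereas $w_0:=\beta J^{-1}(a)$ satisfies $\lng a,w_0\rng=\beta\|a\|^2=\beta$ and $\|w_0\|=|\beta|\,\|J^{-1}(a)\|=|\beta|$ (recall $\|J^{-1}(a)\|=\|a\|=1$ and $\lng a,J^{-1}(a)\rng=\|a\|^2$). So $z-w_0$ attains the minimal distance over $\Hyp(a)$, and since the metric projection in $X$ is single valued it must equal $z-w_0$; this also recovers the classical linearity of the projection onto a hyperplane. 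With $a=Jx$, $x\in S$, and $J^{-1}(Jx)=x$, this gives $\Hyp(Jx)^\circ=\{z:\ z=\lng Jx,z\rng\,x\}=\spa\{x\}$, the last assertion.

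It remains to identify the halfspace polars. For $z=tx$ with $t\geq0$ and any $u\in\Hyp_-(Jx)$ one has $\|tx-u\|\geq\lng Jx,tx-u\rng=t-\lng Jx,u\rng\geq t=\|tx-0\|$, so $P_{\Hyp_-(Jx)}(tx)=0$ and $\cone\{x\}\subseteq\Hyp_-(Jx)^\circ$. Conversely, if $P_{\Hyp_-(Jx)}z=0$ with $z\neq0$ then $z\notin\Hyp_-(Jx)$, hence $\lng Jx,z\rng>0$; since $\Hyp(Jx)\subseteq\Hyp_-(Jx)$ and $0\in\Hyp(Jx)$, the distance from $z$ to $\Hyp(Jx)$ equals $\|z\|$ and is attained at $0$, so $P_{\Hyp(Jx)}z=0$ and the formula forces $z=\lng Jx,z\rng\,x\in\cone\{x\}$. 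Thus $\Hyp_-(Jx)^\circ=\cone\{x\}=[0,x$; the claim for $\Hyp_+(Jx)$ follows from $\Hyp_+(Jx)=\Hyp_-(J(-x))$ (as $J(-x)=-Jx$), and then, since $\Hyp(Jx)=\Hyp_-(Jx)\cap\Hyp_+(Jx)$, one obtains $\Hyp(Jx)^\circ=\Hyp_-(Jx)^\circ\cup\Hyp_+(Jx)^\circ=[0,x\cup[0,-x=\spa\{x\}$, consistent with the third paragraph. I expect the hyperplane projection formula to be the only real obstacle: it is where uniform convexity and uniform smoothness enter, through the duality map $J$; everything else is routine once it is in hand.
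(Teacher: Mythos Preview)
Your proof is correct and proceeds by a genuinely different route from the paper's. For the reverse inclusion $\Hyp_-(Jx)^\circ\subseteq[0,x$, the paper argues geometrically: if a nonzero $z$ projects to $0$ on the halfspace, then the sphere of radius $\|z\|$ centred at $z$ is tangent to $\Hyp_-(Jx)$ at $0$, and smoothness of the norm forces every such tangent sphere to be centred on the ray $[0,x$, whence $z\in[0,x$. You instead first establish the closed-form hyperplane projection $P_{\Hyp(a)}z=z-\langle a,z\rangle\,J^{-1}(a)$ and then reduce the halfspace case to it by observing that $P_{\Hyp_-(Jx)}z=0$ forces $P_{\Hyp(Jx)}z=0$ (since $\Hyp(Jx)\subseteq\Hyp_-(Jx)$ and $0\in\Hyp(Jx)$). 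Your approach has the bonus of delivering the linearity of $P_{\Hyp(a)}$ (the paper's Theorem~\ref{projhyp}) and the identity $\Hyp(Jx)^\circ=\spa\{x\}$ directly from the formula, whereas the paper obtains the hyperplane polar only afterwards as the union of the two halfspace polars---a step that is not automatic for intersections of cones in general, but is verified here because both sides have been computed explicitly. The paper's tangency argument, on the other hand, keeps the geometric role of smoothness visible and avoids any computation.
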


\begin{proof}

Since $\|a\|^*=1$, from the reflexivity and strict convexity of $X$,  by James' characterisation
of strictly convex Banach spaces  there exists a unique $x \in S$
with 
$$\|a\|^*=\lng a,x \rng
=\|a\|^*\|x\| $$
hence, by the definition of $J$, we must have $a=Jx$.

Take $ y\in \Hyp_-(J(x))$, that is $\lng Jx,y\rng \leq 0.$
Hence we have
$$\|x-0\|=\lng Jx,x \rng \leq  \lng Jx,x-y \rng \leq \|Jx\|^*\|x-y\|= \|x-y\|.$$

This shows that $x\in \Hyp_-(Jx)^\circ$ and by the homogeneitty of $J$ that
$$ \{tx: \,t\geq 0 \}= \cone \{x\}= [0,x  \subset \Hyp_-(Jx)^\circ.$$
From the smoothness of  $\Hyp_-(Jx)$, $S+x$ and its homothetic images with center $0$ are the only 
spheres tangent to $\Hyp_-(Jx).$
Hence $\Hyp_-(Jx)^\circ =[0,x $.

A similae reasoning yields that $\Hyp_+(J(x))^\circ =[0,-x $ and completes the proof.
\end{proof}

\begin{theorem}\label{projhyp}

The metric projection on the hyperplane of the uniformly smooth
 and uniformly convex Banach space is linear.

\end{theorem}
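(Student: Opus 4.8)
The plan is not to compute the projection explicitly but to read off its linearity from the structure already assembled. First I would apply Proposition~\ref{felt}: a hyperplane through the origin can be written as $\Hyp(Jx_0)$ for a unique $x_0\in S$, and, regarded as a convex cone, its polar is the line $\spa\{x_0\}$, a one-dimensional vector subspace of $X$. (A translated hyperplane $\Hyp+u$ reduces to this case via $P_{\Hyp+u}(z)=u+P_{\Hyp}(z-u)$, which is affine.)

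Since $\Hyp(Jx_0)$ is a nonempty closed convex cone, Theorem~\ref{footet} applies: $P=P_{\Hyp(Jx_0)}$ and $R=I-P$ are mutually polar retractions with ranges $\mc P=\Hyp(Jx_0)$ and $\mc R=RX=\Hyp(Jx_0)^\circ=\spa\{x_0\}$. Both ranges are vector subspaces of $X$, so Proposition~\ref{kovlin}(ii) yields at once that $P$ and $R$ are linear. If one prefers an explicit witness: for each $z\in X$ we have $Rz\in\spa\{x_0\}$, say $Rz=\lambda x_0$; imposing $Pz=z-\lambda x_0\in\Hyp(Jx_0)$ and using $\lng Jx_0,x_0\rng=\|x_0\|^2=1$ forces $\lambda=\lng Jx_0,z\rng$, so that $P_{\Hyp(Jx_0)}z=z-\lng Jx_0,z\rng\,x_0$, which is manifestly linear because $Jx_0\in X^*$ is a fixed functional.

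I do not anticipate a genuine obstacle here: the nonlinearity of the duality map $J$ does not interfere, since $J$ enters only through the fixed element $x_0$ and the fixed functional $Jx_0$ and is never applied to the variable point $z$. All the analytic content — in particular the use of smoothness and strict convexity to pin the polar of the halfspace down to a single halfline — has already been absorbed into Proposition~\ref{felt}, so the present theorem is essentially a formal corollary; the only thing left is to check that Theorem~\ref{footet} and Proposition~\ref{kovlin}(ii) apply verbatim, which they do.
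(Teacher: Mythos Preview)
Your argument is correct and matches the paper's own proof, which simply records that the theorem is an immediate consequence of Proposition~\ref{felt} together with item~(ii) of Proposition~\ref{kovlin}. You have supplied the explicit link through Theorem~\ref{footet} and an optional closed formula, but the underlying route is identical.
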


\begin{proof} 
The proof is the immediate consequence of Proposition \ref{felt} and  of item(ii) of Lemma  \ref{kovlin}.
\end{proof}


\section{The polar of the convex cone}

\begin{proposition}\label{metsz}

Each closed convex cone  $\Kup$ is the intersection of hypercones. If

$$ \Pi _\Kup := \{x\in S: \Kup \subset \Hyp_-(Jx)\},$$
then
$$\Kup^\circ =\cup_{x\in  \Pi_\Kup} [0,x = \cone \Pi_\Kup.$$
 
\end{proposition}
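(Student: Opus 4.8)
The plan is to establish the two displayed equalities by a double-inclusion argument, leaning on Proposition \ref{felt} (which identifies the polar of a hypercone $\Hyp_-(Jx)$ with the ray $[0,x$) and on the fact, already recalled in the excerpt, that in our ambient space every nonempty closed convex set is an intersection of closed halfspaces. First I would note that a closed convex cone $\Kup$ is in particular a closed convex set, so $\Kup=\cap_{i}(\Hyp_i+u_i)$ for suitable hyperplanes and translations; the key observation is that since $0\in\Kup$, none of these halfspaces can be a proper translate that excludes the origin, and moreover each supporting halfspace can be shrunk so that its boundary hyperplane passes through $0$ — concretely, if $\Kup\subseteq\{x:\lng a,x\rng\le c\}$ with $\Kup$ a cone and $c\ge 0$, then homogeneity forces $\Kup\subseteq\{x:\lng a,x\rng\le 0\}=\Hyp_-(a)$. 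Rewriting each $a$ as $Jx$ with $x\in S$ via Proposition \ref{felt}, we get $\Kup=\cap\{\Hyp_-(Jx):x\in\Pi_\Kup\}$, which is the first assertion.

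Next I would compute the polar. For the inclusion $\cone\Pi_\Kup\subseteq\Kup^\circ$: if $x\in\Pi_\Kup$ then $\Kup\subseteq\Hyp_-(Jx)$, hence $\Hyp_-(Jx)^\circ\subseteq\Kup^\circ$ (the polar operation is inclusion-reversing, which follows directly from the definition $\Kup^\circ=\{z:P_\Kup z=0\}$ together with the fact that a larger cone is ``easier'' to project onto the vertex — this monotonicity I would state as a small preliminary remark), and by Proposition \ref{felt} we have $[0,x=\Hyp_-(Jx)^\circ\subseteq\Kup^\circ$; taking the union over $x\in\Pi_\Kup$ gives $\cup_{x\in\Pi_\Kup}[0,x\subseteq\Kup^\circ$, and since each $[0,x$ is a ray through $0$ this union is exactly $\cone\Pi_\Kup$. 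For the reverse inclusion $\Kup^\circ\subseteq\cone\Pi_\Kup$: take $z\in\Kup^\circ$, $z\ne 0$; then $P_\Kup z=0$, so by the variational characterization of the metric projection $\lng J(z-0),\,c-0\rng\le 0$ for all $c\in\Kup$, i.e. $\lng Jz,c\rng\le 0$ for all $c\in\Kup$. Writing $\hat z=z/\|z\|\in S$, this says $\Kup\subseteq\Hyp_-(J\hat z)$, so $\hat z\in\Pi_\Kup$ and $z=\|z\|\hat z\in[0,\hat z\subseteq\cone\Pi_\Kup$.

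The main obstacle I anticipate is the precise justification that $z\in\Kup^\circ$ yields the supporting inequality $\lng Jz,c\rng\le 0$ for every $c\in\Kup$: this is the standard variational inequality characterizing metric projections in smooth Banach spaces (the gradient of $\tfrac12\|z-\cdot\|^2$ at the minimizer is the normalized duality map applied to $z-P_\Kup z$), but in the present self-contained note it may need to be invoked carefully or re-derived from the cone structure, perhaps by the same trick used in the proof of Proposition \ref{felt}, where one compares $\|z-0\|=\lng Jz,z\rng$ against $\|z-tc\|$ for small $t>0$ and lets $t\to 0^+$. A secondary point to handle cleanly is the reduction, in the first part, from an arbitrary intersection of closed halfspaces representing $\Kup$ to an intersection of hypercones; the homogeneity argument sketched above settles it, but one should remark that it suffices to intersect over \emph{all} $x\in S$ with $\Kup\subseteq\Hyp_-(Jx)$ rather than over some particular defining family, which is exactly how $\Pi_\Kup$ is defined. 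Everything else is routine inclusion-chasing.
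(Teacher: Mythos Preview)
Your proposal is correct and follows essentially the same double-inclusion strategy as the paper's proof: antitonicity of $^\circ$ together with Proposition \ref{felt} for $\cone\Pi_\Kup\subseteq\Kup^\circ$, and a supporting-hyperplane argument for the reverse. The only difference is that the paper simply asserts that $y\in\Kup^\circ$ with $\|y\|=1$ implies the hyperplane with normal $Jy$ supports $\Kup$ at $0$, whereas you carefully unpack this via the variational inequality---which is precisely the step you flagged as needing justification.
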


\begin{proof}
The first assertion is the particular case of the
assertion from Banach space geometry that in  the uniformly convex
and uniformly smooth Banach space every nonempty closed convex set is
the intersection  of closed halfspaces .

Since $\Kup \subset \Hyp_-(Jx),\,\,\forall \,\, x\in  \Pi_\Kup$,
from the fact that $^\circ$ is obviously antitonic set-mapping, it follows that
$$ \Kup^\circ \supset  \cup_{x\in  \Pi_\Kup} [0,x.$$ 
Suppose that $y\in \Kup^\circ$. We can suppose $\|y\|=1$. Then the hyperplane with the normal $Jy$ supports
$\Kup$ at $0$, and hence $\Kup \subset \Hyp_-(Jy)$.
This shows that $y\in \Pi_\Kup $ and completes the proof.

\end{proof}


\section{The proof of Theorem \ref{eloz}}

We follow in the proof the scheme : (i) $\Rightarrow $ (ii) $\Rightarrow $ (iii) $\Rightarrow $ (iv) $\Rightarrow $  (v) $\Rightarrow$ (i)
and (i) $\Leftrightarrow$ (vi).

\begin{proof}

The implication (i) $\Rightarrow $ (ii) is obvious.

(ii) $\Rightarrow $ (iii).

Take the bidimensional subspace  $\mc D = \spa \{a,\,b \}\subset  X^*$ with $a,\,b \in S^*$ linearly independent elements.

From (ii) the metric projection $P_\mc V $ onto the vector space $\mc V = \ker \{a,\,b\}$ is linear.
Since $P_\mc V$ and $I-P_\mc V$ are mutually polar by Theorem 3, using Propsition \ref{kovlin}
we conclude that $\mc V^\circ =(I-P_\mc V)X$ is a bidimensional vector space
(it is in fact  the  subspace generated by the set $\{J^{-1}a,J^{-1}b\}.$).

Observe that the set of normals to $\mc V$ is the set  $\mc C = \mc D\cap S^*$. Hence with

$$ \Pi _\mc V := \{J^{*-1}(c)\in S: \mc V \subset \Hyp_-(c)\},\,\, c\in \mc C$$
we have
$$\mc V^\circ =\cup_{J^{*-1}(c)\in  \Pi_\mc V} [0,.J^{*-1}(c) = (I-P_\mc V)X$$
by Proposition \ref{metsz}. This means that  

$$ \{J^{*-1}(c) :\,c\in \mc C\} \subset \mc V^\circ,$$
and since  $\mc V^\circ$ is a vector space and $J^{*-1}$ is homogeneous 
that is 
$$\spa J^{*-1}(\mc C)=J^{*-1}(\mc D)  \subset \mc V^\circ,$$
that shws that (iii) holds.

The implication (iii)$\Rightarrow$(iv)  is  Corollary 3 in \cite{Nemeth2025}.

(iv) $\Rightarrow$ (v) is the content of Proposition 4 in \cite{Nemeth2025}.

(v) $\Rightarrow$ (i),
The closed vector space $\mc V \subset X$ is a closed convex cone. Since $\ker P_\mc V = (I-P_\mc V)X=
 \mc V^\circ$ is convex and $P_\mc V$ is homogeneus it follows  that  $\mc V^\circ$ is a vector space
 and by Proposition \ref{kovlin}  we have that $P_\mc V$ is linear.
 
 Finally, the equivalence (i) $\Leftrightarrow$  (v) is the classical result in \cite{Singer}.

\end{proof}

\begin{remark}

The theoren contains the following implicite results:

\begin{enumerate}[(i)]

\item In the uniformly convex and uniformly smooth Banach space of dimension $\geq 3$
the metric projection on subspaces  are linear
if and only if the polars of convex cones are convex.
Each one of these conditions characterizes inner product spaces.

\item  In the uniformly smooth and uniformly convex Banach space of dimension $\geq 3$  the
normalized duality mapping maps bidimensional
subspaces in bidimensional ones if and only if it is
linear. This property characterizes inner product spaces,

\end{enumerate}

\end{remark}

\end{document}